\newtheorem{thm}{Theorem}[section]
\newtheorem{cor}[thm]{Corollary}
\newtheorem{lem}[thm]{Lemma}
\newtheorem{defi}[thm]{Definition}
\newtheorem{propp}[thm]{Proposition}
\newtheorem{remark}[thm]{Remark}
\title{\bf{On generalized cyclotomic derivations}}
\author{Sakshi Gupta \footnote{Dept. of Mathematics, Indian Institute of Technology, Delhi,  India. Email: maz188315@iitd.ac.in, Orchid id: 0000-0003-1012-0420} $\qquad$ Surjeet Kour \footnote{Dept. of Mathematics, Indian Institute of Technology, 
Delhi, India. Email: koursurjeet@gmail.com }}
\date{}
\begin{document}

\maketitle

\medskip
\begin{abstract}
In this article we study the field of rational constants and Darboux polynomials of a generalized cyclotomic $K$-derivation $d$ of $K[X]$. It is shown that $d$ is  without Darboux polynomials if and only if $K(X)^d=K$. Result is also studied in the tensor product of polynomial algebras.
\end{abstract}
\medskip
\textit{Keywords:} {Darboux polynomial; Jouanolou derivation; cyclotomic derivation} 
\textit{2010 Mathematics Subject Classification:} 12H05, 13N15

\section{Introduction}
Throughout this article $K$ denotes a field of characteristic zero,  $K[X]=K[x_1,x_2,\ldots, x_n]$  is the  polynomial algebra in $n$ variables over $K$ and $K(X)$ denotes the field of fractions of $K[X]$. Let  $d$ be a $K$-derivation of $K[X]$ and  $K[X]^d$  denote the algebra of constants of $d$. The $K$-derivation $d$ of $K[X]$ uniquely extends to a $K$-derivation of $K(X)$ and we continue to denote it by $d$. $K(X)^d$ represents the field of rational constants of $d$, that is, $K(X)^d=\lbrace f \in K(X); d(f)=0\rbrace$ and $K[X]^d \subseteq K(X)^d$.
\par
A non-constant polynomial $ f \in K[X]$ is said to be a Darboux polynomial of $d$ if $d(f)=\lambda f$, for some $\lambda \in K[X]$ and in this case $\lambda$ is called the  co-factor of $f$. We say $d$ is without Darboux polynomials if $d$ has no darboux polynomials. It is easy to observe that if $d$ is without Darboux polynomials then $K(X)^d =K$ but the converse of the above statement is not true, in general. One can refer to $(\cite{NA94},\cite{NA96})$ for counter examples. In this paper we study a class of  monomial derivations for which $K(X)^d=K$ if and only if $d$ is without Darboux polynomials. Note that, a derivation $d$ of $K[X]$ is said to be a monomial derivation if $d(x_i)$ is a monomial for every $ 1 \leq i \leq n$.
\par In $\cite{OM11}$,  Nowicki and Ollagnier studied  the Darboux polynomials  and field of rational constants of a monomial derivation over the polynomial algebra $K[X]$.  For $1 \leq i \leq n$, let $\alpha_i =(\alpha_{i1}, \ldots, \alpha_{i n})\in \mathbb{N}^n$. Consider the  monomial derivation $d$ given by $d(x_i)=X^{\alpha_i}$, where $X^{\alpha_i}$ denotes the monomial $x_1^{\alpha_{i 1}} \cdots x_n^{\alpha_{i n}}$. Then one can associate a matrix $A$ given by  $A =[\alpha_{ij}]-I$ with the monomial derivation  d. Let $w_d$ denote the determinant of the matrix A. A monomial derivation $d$ is said to be normal if $w_d\neq 0$  and $\alpha_{i i}=0 ~ \forall~ 1 \leq i \leq n$. In $\cite{OM11}$,  Nowicki and Ollagnier  proved that for a  normal derivation $d$,  $K(X)^d=K$ if and only if $d$ is without Darboux  polynomials. In this article, they also raised the similar question in case of $w_d=0$. For $n=3$, an independent proof was given to show that the result is true even if $w_d=0$. It was also observed that the idea used to prove the result for $n=3$ case can not be extended further. At the end of the article they mentioned the following example of monomial derivation $d$ on $K[x,y,z,w ]$;
$$ d(x)= w^2, ~~ d(y)=zw, ~~ d(z)=y^2, ~~ d(w)=xy$$
and raised the same question about field of constants and Darboux polynomials.
Note that for this derivation $w_d =0$. Our study of field of constants of monomial derivations  is motivated by  aforesaid example.

In this article we study a large class of monomial derivations for which $K(X)^d=K$ if and only if $d$ is without Darboux polynomial. The example mentioned above is a very particular case of our result. Further our result is independent of the condition $w_d\neq 0$.

\section{Result}
Let $s$ be a non-negative integer. A derivation $d$ on $K[X]$ is said to be homogeneous of degree $s$, if 
$$
d(A^{m}) \subseteq d(A^{m+s}) ~~~~~~~  \forall ~ m \geq 1,
$$
where $A^{m}$ is the $K$-subspace of all the homogeneous polynomials of degree $m$.  In particular a monomial derivation $d$ on $K[X]$ is  homogeneous  of degree $s$ if for each $1 \leq i \leq n$, $d(x_i)'s$ are monomials of  total degree $s+1$.

For a homogeneous $K$-derivation $d$ of $K[X]$ the following is a well known result  $(\cite{OM93}$ Lemma $2.1)$.
\begin{lem}{\label{deg}}
Let $d$ be a homogeneous $K$-derivation of $K[X]$ of degree $s$. If $f \in K[X]$ is a Darboux polynomial of $d$ with the co-factor  $\lambda$, then $\lambda$ is homogeneous polynomial of degree $s$ and all the homogeneous components of $f$ are also Darboux polynomials with the same co-factor $\lambda$.
\end{lem}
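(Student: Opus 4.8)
The plan is to pass to homogeneous components and exploit that $K[X]$ is an integral domain, so that top- and bottom-degree terms cannot cancel. Write $f = f_r + f_{r+1} + \cdots + f_t$ with $f_r, f_t \neq 0$ for the decomposition into homogeneous pieces, where $f_j \in A^j$, and write $\lambda = \lambda_a + \cdots + \lambda_b$ with $\lambda_a, \lambda_b \neq 0$. Since $d$ is homogeneous of degree $s$, each $d(f_j)$ lies in $A^{j+s}$, so the identity $d(f) = \lambda f$ becomes an equality of graded sums whose degree-$m$ slice reads $d(f_{m-s}) = \sum_{i+j=m} \lambda_i f_j$, with the convention $f_k = 0$ for $k$ outside $[r,t]$.

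First I would pin down the degree of $\lambda$ by inspecting the two ends. In the product $\lambda f$ the top homogeneous component has degree exactly $b + t$ and equals $\lambda_b f_t$, which is nonzero because $K[X]$ has no zero divisors; matching this against $d(f)$, all of whose components have degree at most $t + s$, forces $b \leq s$. Symmetrically, the bottom component of $\lambda f$ is $\lambda_a f_r \neq 0$ in degree $a + r$, while every nonzero $d(f_j)$ has degree at least $r + s$, forcing $a \geq s$. Since $a \leq b$ trivially, I obtain $s \leq a \leq b \leq s$, so $a = b = s$ and $\lambda = \lambda_s$ is homogeneous of degree $s$. The degenerate case $d(f) = 0$ gives $\lambda f = 0$, hence $\lambda = 0$, and is recorded separately.

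With $\lambda$ now known to be homogeneous of degree $s$, the second claim falls out by re-reading the graded identity: the degree-$(j+s)$ slice of $d(f) = \lambda f$ is simply $d(f_j) = \lambda f_j$, since $\lambda f_j \in A^{j+s}$ is the only contribution of that degree on the right-hand side. Hence each homogeneous component $f_j$ satisfies $d(f_j) = \lambda f_j$ and is therefore a Darboux polynomial of $d$ with the same co-factor $\lambda$.

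The main obstacle---and the only place the hypotheses genuinely bite---is the non-cancellation of the extreme terms $\lambda_b f_t$ and $\lambda_a f_r$; this is exactly where integrality of $K[X]$ is needed, and where the homogeneity of $d$ is essential, since it keeps each $d(f_j)$ cleanly inside a single graded piece so that the degree bookkeeping closes up. Once the two inequalities $b \le s \le a$ are in hand, everything else is a formal comparison of homogeneous components.
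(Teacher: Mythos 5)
Your proof is correct and complete. Note that the paper itself gives no proof of this lemma: it quotes it as a known result, citing Lemma 2.1 of Ollagnier--Nowicki--Strelcyn \cite{OM93}. Your argument---decomposing $f=f_r+\cdots+f_t$ and $\lambda=\lambda_a+\cdots+\lambda_b$ into homogeneous pieces, using that $K[X]$ is an integral domain so the extreme products $\lambda_b f_t$ and $\lambda_a f_r$ cannot vanish, squeezing $s\le a\le b\le s$ to get $\lambda$ homogeneous of degree $s$, and then reading off the graded slices $d(f_j)=\lambda f_j$---is precisely the standard proof of this fact, essentially the one in the cited reference, and you handle the degenerate case $d(f)=0$ (hence $\lambda=0$) correctly. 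The only cosmetic caveat is one already present in the lemma's statement rather than in your argument: a nonzero constant component $f_0$ satisfies $d(f_0)=\lambda f_0$ only if $\lambda=0$, and such a component is not literally a ``Darboux polynomial'' under the paper's convention that Darboux polynomials are non-constant.
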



Before proceeding further we fix some notations and give some more definitions. Let $n \geq 2$ be a positive integer. A derivation $d$ of $K[X]$ is called cyclotomic if, for $1\leq i \leq n-1$, $d(x_i)= x_{i+1}$ and $d(x_n)=x_1$. In the same line we have defined the generalized cyclotomic derivation. Let $S=\{x_1, \dots, x_n\}$ denote the set of $n$-variables, $K[S]$ be the $K$-algebra generated by $S$  and $k>1$ be a positive integer.  A monomial  derivation $d$ of $K[S]$ is said to be a generalized cyclotomic derivation if we can split $S$  into $k$ parts $($say $S_i,~1\leq i \leq k)$ such that $d(S_i)\subseteq K[S_{i+1}]$ for $1 \leq i \leq n-1$  and  $d(S_n) \subseteq K[S_1]$, where $K[S_i]$ denotes the $K$-algebra generated by $S_i$.

Let us  redefine the variables as
$S_i= \{ x_{i ,1}, \ldots, x_{i, t_i}\}$ for $1 \leq i \leq k$ and take $S =\cup S_i$. Then,
\begin{defi}
A derivation $d$ of $K[S]$ is said to be generalized cyclotomic  derivation if 
$$d(x_{i,j})=x_{i+1,1}^{\alpha_{i_j,1}} x_{i+1,2}^{\alpha_{i_j,2}} \cdots x_{i+1,t_{i+1}}^{\alpha_{i_j,t_{i+1}}} \hspace{3mm}\forall ~1 \leq j \leq t_i,\hspace{1.5mm}1 \leq i \leq k-1$$    
and 
$$d(x_{k,j})=x_{1,1}^{\alpha_{k_j,1}} x_{1,2}^{\alpha_{k_j,2}} \cdots x_{1,t_{1}}^{\alpha_{k_j,t_{1}} } \hspace{3mm} \forall ~ 1 \leq j \leq t_k,$$
where $\alpha_{i_{j},l}$ for every $1 \leq j \leq t_i$, $1 \leq l \leq t_{i+1}$, $1 \leq i \leq k-1$ and $\alpha_{k_{j},l}$ for every $1 \leq j \leq t_k$, $1 \leq l \leq t_{1}$ are non negative integers.  
\end{defi}
Let $s$ be a positive integer.
 We say $d$ is homogeneous generalized cyclotomic derivation of degree $s-1$ if $d(x_{i j})$ are monomials of  total degree $s$.  Now we state our main result.


\begin{thm}{\label{MT}}
Let $d$ be a homogeneous generalized cyclotomic derivation of $K[S]$ of degree $s-1$ as defined above. Then $d$ is without darboux polynomials if and only if $K(S)^{d}=K$.
\end{thm}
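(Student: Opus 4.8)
For the statement of Theorem~\ref{MT} there are two implications, of very different weight. The implication ``$d$ without Darboux polynomials $\Rightarrow K(S)^d=K$'' is the elementary one, and I would dispose of it first by proving its contrapositive: if $\varphi\in K(S)^d\setminus K$, write $\varphi=f/g$ with $f,g\in K[S]$ coprime. Then $d(\varphi)=0$ gives $d(f)\,g=f\,d(g)$, and coprimality forces $f\mid d(f)$ and $g\mid d(g)$; since $\varphi\notin K$, at least one of $f,g$ is nonconstant and is therefore a Darboux polynomial of $d$. So the whole content of the theorem is the reverse implication, which I would also attack through its contrapositive: assuming $d$ possesses a Darboux polynomial, I must manufacture a nonconstant rational constant.

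For the reduction, recall that every irreducible factor of a Darboux polynomial is again a Darboux polynomial, and that by Lemma~\ref{deg} each homogeneous component of a Darboux polynomial is Darboux with the same cofactor. Hence I may start from an irreducible, homogeneous Darboux polynomial $f$ of total degree $m$, with homogeneous cofactor $\lambda$ of degree $s-1$ and $d(f)=\lambda f$. If $\lambda=0$ then $f$ is itself a nonconstant element of $K(S)^d$ and we are done, so I assume $\lambda\neq 0$.

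The decisive structural feature to exploit is the block-cyclic grading. Since $d(x_{i,j})\in K[S_{i+1}]$ with $i+1\neq i$ (as $k>1$), the Laurent monomial $d(x_{i,j})/x_{i,j}$ always carries the factor $x_{i,j}^{-1}$, which no term of the numerator can cancel. A short computation with $d(X^{\beta})/X^{\beta}=\sum_{i,j}\beta_{i,j}\,d(x_{i,j})/x_{i,j}$ then shows that these Laurent monomials are pairwise distinct and that their only $K$-combination lying in $K[S]$ is $0$; consequently $d$ has \emph{no nonconstant monomial Darboux polynomial}. Thus the $f$ above necessarily has at least two terms, and I would record the refined multigrading in which $d=d_1+\cdots+d_k$, where $d_i$ differentiates only the block-$i$ variables and shifts the block-multidegree by the vector $v_i=s\,e_{i+1}-e_i$.

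Writing $f=\sum_\mu f_\mu$ and $\lambda=\sum_\nu\lambda_\nu$ as sums of their block-multihomogeneous parts, the Darboux equation separates into
\[
\sum_i d_i\bigl(f_{\rho-v_i}\bigr)=\sum_{\nu+\mu=\rho}\lambda_\nu\, f_\mu\qquad\text{for every block-multidegree }\rho .
\]
The plan is to analyze this system at the extreme block-multidegrees of $f$: choosing a generic weight $w$ on the blocks and comparing the $w$-maximal parts of the two sides constrains both the Newton polytope of $\lambda$ and the shape of $f$ along the cycle $S_1\to S_2\to\cdots\to S_k\to S_1$. I expect this to pin $f$ down tightly enough that two distinct multihomogeneous pieces of $f$ (or of $f$ together with a monomial built from the $v_i$) share proportional cofactors, so that a suitable ratio or power-product is a nonconstant rational constant, contradicting $K(S)^d=K$. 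The main obstacle is precisely this leading-term bookkeeping: because $d_i(f_\mu)$ vanishes whenever $f_\mu$ omits the block-$i$ variables, the naive matching of $w$-maximal terms can degenerate, and controlling these degeneracies uniformly across all blocks—rather than only for $n=3$ or for normal derivations as in \cite{OM11}—is the crux of the argument.
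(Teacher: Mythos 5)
Your first implication and your preliminary reductions are sound: the coprimality argument for ``no Darboux polynomials $\Rightarrow K(S)^d=K$'' is correct, and so are the observations that one may take $f$ irreducible and homogeneous (via Lemma~\ref{deg}), that $\lambda=0$ finishes the proof immediately, and that $d$ admits no monomial Darboux polynomials. But the converse implication --- the entire content of Theorem~\ref{MT} --- is not proved. After setting up the block multigrading $d=d_1+\cdots+d_k$ and the componentwise Darboux equations, your argument stops at a declaration of intent: ``I expect this to pin $f$ down tightly enough\dots''. No mechanism is exhibited for producing a nonconstant rational constant from that system, and you yourself name the unresolved crux (degeneration of the $w$-maximal terms when a block of variables is absent from some $f_\mu$). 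This is a genuine gap, not an omitted routine verification: the leading-term bookkeeping you propose is essentially the strategy Nowicki and Ollagnier carried out for $n=3$ in \cite{OM11}, and the paper's introduction records their observation that precisely this idea does not extend to the general case. As proposed, your outline has no visible path to completion.

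The paper's proof replaces all such bookkeeping with a symmetry argument that your proposal misses entirely. Set $N=1+s+\cdots+s^{k-1}$, let $\xi$ be a primitive $N$-th root of unity, put $q_i=\sum_{j=0}^{i}s^j$, and define the $K$-automorphism $\sigma$ of $K[S]$ by $\sigma(x_{k-i,j})=\xi^{q_i}x_{k-i,j}$. The cyclic block structure together with homogeneity of degree $s-1$ yields exactly $\sigma^{-1}d\sigma=\xi d$. Consequently $F=\prod_{m=0}^{N-1}\sigma^m(f)$ satisfies $d(F)=\Lambda F$ with $\Lambda=\sum_{m=0}^{N-1}\xi^m\sigma^m(\lambda)$. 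Every monomial of $\lambda$ is a $\sigma$-eigenvector with eigenvalue $\xi^{\delta}$, where $0\leq\delta\leq s^{k-1}-1$, so $0<\delta+1<N$; hence $\xi^{\delta+1}\neq 1$ and each geometric series $\sum_{m=0}^{N-1}\xi^{m(\delta+1)}$ vanishes. Thus $\Lambda=0$, so $F$ is a nonconstant element of $K(S)^{d}$, contradicting $K(S)^d=K$. This averaging over the finite cyclic group $\langle\sigma\rangle$ --- killing the cofactor by a root-of-unity cancellation instead of analyzing the shape of $f$ --- is the missing idea; without it, or a substitute of comparable strength, your approach cannot be completed.
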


\begin{proof}
$(\Rightarrow)$ Easy to prove.
\\
$(\Leftarrow)$ Assume that $K(S)^d=K$.  Suppose $d$ has a Darboux polynomial $f$ such that $d(f)=\lambda f$. Then by Lemma \ref{deg}, we may assume that $f$ is homogeneous and $\lambda$ is a homogeneous of degree $s-1$. Write $\lambda$ as:
$$
\lambda= \sum \limits_{ \sum \limits_{1 \leq i \leq k} | \beta_{i}|=s-1}  a_{\beta} X_{1}^{\beta_{1}} X_{2}^{\beta_{2}} \cdots X_{k}^{\beta_{k}},
$$
where  $X_i^{\beta_i}$ denotes the monomial $x_{i,1}^{\beta_{i,1}} \cdots x_{i,t_i}^{\beta_{i,t_i}}$ and $|\beta_i| = \sum \limits_{j=1}^{j=t_i}\beta_{i j}$ for $1 \leq i \leq k$.

Let $N = (1 +s + s^2 + \cdots + s^{k-1})$ and let $\xi$ be the primitive $N$-th root of unity. 
For $0 \leq i \leq k-1$, define $q_i=\sum \limits_{j=0}^{i} s^j$. Observe that $q_{k-1}=N$. Consider
 a $K$-automorphism $\sigma$ of $K[S]$ given by: 
\begin{equation*}
\sigma(x_{k-i,j})=\xi^{q_i} x_{k-i,j} \hspace{1mm}  \forall ~~ 1 \leq j \leq t_{k-i}.
\end{equation*}

Moreover, 
\begin{align*}
\sigma^{-1} d \sigma (x_{k-i,j}) &= \sigma^{-1} d({\xi^{q_i} x_{k-i,j}}) \\
&=\xi^{q_i} \sigma^{-1} d (x_{k-i,j}) \\
&=\xi^{q_i} \sigma^{-1}\left(x_{k-i+1,1}^{\alpha_{(k-i)_j,1}} x_{k-i+1,2}^{\alpha_{(k-i)_j,2}} \cdots x_{k-i+1,t_{k-i+1}}^{\alpha_{(k-i)_j,t_{k-i+1}}}\right) \\
&=\xi^{q_i} \xi^{-q_{i-1} \left(\alpha_{(k-i)_j,1}+\cdots + \alpha_{(k-i)_j,t_{k-i+1}}  \right) } d(x_{k-i,j}) \\
&=\xi^{q_i} \xi^{-s(q_{i-1})} d(x_{k-i,j})\\
&=\xi d(x_{k-i,j}).
\end{align*}

Therefore, we have $\sigma^{-1}d\sigma = \xi d.$
Let $F=\prod \limits_{i=0}^{N-1} \sigma^i(f)$. Clearly, $F$ is not a constant polynomial.  Furthermore,

\begin{align*}
d(F)&=d\left(\prod \limits_{i=0}^{N-1} \sigma^i(f)\right) \\
&=\sum \limits_{i=0}^{N-1} \sigma^{0}(f) \cdots d(\sigma^{i}(f)) \cdots \sigma^{N-1}(f)\\
&=\sum \limits_{i=0}^{N-1} \sigma^{0}(f) \cdots \xi^i\sigma^{i}d(f) \cdots \sigma^{N-1}(f) \\
&=\sum \limits_{i=0}^{N-1} \sigma^{0}(f) \cdots \xi^i\sigma^{i}(\lambda f) \cdots \sigma^{N-1}(f) \\
&=\left(\sum \limits_{i=0}^{N-1} \xi^i \sigma^{i}(\lambda) \right) ~\sigma^{0}(f) \cdots  \sigma^{N-1}(f) \\
&=\Lambda F,
\end{align*}
where $\Lambda=\sum \limits_{i=0}^{N-1} \xi^i \sigma^{i}(\lambda)$. \\
Now, let us do the precise calculation for $\Lambda$. If we look at the $m$-th term in the sum, we have 
\begin{align*}
\xi^m \sigma^{m}(\lambda)&=\xi^m \sigma^{m}\left[\sum\limits_{ \sum \limits_{1 \leq i \leq k} | \beta_{i}|=s-1}  a_{\beta} X_{1}^{\beta_{1}} X_{2}^{\beta_{2}} \cdots X_{k}^{\beta_{k}}\right] \\
&=\xi^m \left[\sum\limits_{ \sum \limits_{1 \leq i \leq k} | \beta_{i}|=s-1} a_{\beta} ~\sigma^{m} \left(X_1^{\beta_1}\right)\ldots \sigma^{m}\left(X_k^{\beta_k}\right)\right] \\
&=\xi^m\left[\sum\limits_{ \sum \limits_{1 \leq i \leq k} | \beta_{i}|=s-1} \left(\prod_{l=1}^{l=k}\sigma^{m} \left(x_{l1}^{\beta_{l1}}\right)  \cdots  \sigma^{m}\left(x_{lt_l}^{\beta_{lt_l}}\right)\right)\right]\\
&=\xi^m\left[\sum\limits_{ \sum \limits_{1 \leq i \leq k} | \beta_{i}|=s-1}  a_{\beta} \prod_{l=1}^{l=k}\xi^{mp_lq_{k-l}}X_{l}^{\beta_{l}}\right],
\end{align*}
where $p_l= \beta_{l1}+ \cdots + \beta_{lt_l} $ for all $1 \leq l \leq k$.  Therefore,
\begin{align*}
\xi^m \sigma^{m}(\lambda) &=\xi^m\left[\sum\limits_{ \sum \limits_{1 \leq i \leq k} | \beta_{i}|=s-1}  a_{\beta} \xi^{m\left(\sum \limits_{l=1}^{l=k} p_lq_{k-l}\right)}X_{1}^{\beta_{1}} X_{2}^{\beta_{2}} \cdots X_{k}^{\beta_{k}} \right]\\
&=\xi^m\left[\sum\limits_{ \sum \limits_{1 \leq i \leq k} | \beta_{i}|=s-1}  a_{\beta} \xi^{m\left(\sum \limits_{l=2}^{l=k} p_lq_{k-l}\right)}X_{1}^{\beta_{1}} X_{2}^{\beta_{2}} \cdots X_{k}^{\beta_{k}} \right]\\
&=\sum\limits_{ \sum \limits_{1 \leq i \leq k} | \beta_{i}|=s-1}  a_{\beta} \xi^{m(\delta +1)}X_{1}^{\beta_{1}} X_{2}^{\beta_{2}} \cdots X_{k}^{\beta_{k}}, \\
\end{align*}
where $\delta=\sum \limits_{l=2}^{l=k} p_lq_{k-l}$. More precisely,

\begin{align*}
\delta =&  \sum_{l=2}^{l=k}q_{k-l}p_l\\ 
=& \sum_{l=2}^{l=k} \left(\sum \limits_{j=0}^{k-l} s^{j} \right) \left(\beta_{l,1} + \cdots + \beta_{l,t_l} \right) \\
=& \sum_{l=0}^{l=k-2}s^l(\beta_{2,1} + \cdots + \beta_{2,t_2} + \cdots + \beta_{k,1} + \cdots + \beta_{k-l,t_{k-l}}) \\
\leq& \sum_{l=0}^{l=k-2}s^l (s-1)
=s^{k-1}-1.
\end{align*}
This implies that $0 < 1+\delta \leq s^{k-1} < N$. Therefore, $\xi^{1+\delta} \neq 1$. 
Hence,
\begin{align*}
\Lambda&=\sum \limits_{m=0}^{N-1} \left[\sum\limits_{ \sum \limits_{1 \leq i \leq k} | \beta_{i}|=s-1} \xi^{m(\delta +1)} a_{\beta} X_{1}^{\beta_{1}} X_{2}^{\beta_{2}} \cdots X_{k}^{\beta_{k}}  \right] \\
&=  \sum \limits_{ \sum \limits_{1 \leq i \leq k} | \beta_{i}|=s-1} \left( \sum \limits_{m=0}^{N-1} \xi^{(\delta+1) m} \right) a_{\beta} X_{1}^{\beta_{1}} X_{2}^{\beta_{2}} \cdots X_{k}^{\beta_{k}}   \\
&=  \sum \limits_{ \sum \limits_{1 \leq i \leq k} | \beta_{i}|=s-1} \Bigg( \frac{1-\xi^{N(\delta+1)}}{1-\xi^{\delta+1}} \Bigg) a_{\beta} X_{1}^{\beta_{1}} X_{2}^{\beta_{2}} \cdots X_{k}^{\beta_{k}}   \\
&=0.
\end{align*}
Therefore, $d(F)=\Lambda F=0$. In other words, $F \in K(S)^{d}$, a contradiction.
\end{proof}

\begin{remark}
From Theorem \ref{MT}, we can prove that the monomial derivation $d$ of $K[x,y,z,w]$ defined by:
$$d(x)=w^2, d(y)=zw, d(z)=y^2, d(w)=xy$$ in $\cite{OM11}$ has no Darboux polynomials if and only if $K(x,y,z,w)^{d}=K.$
\end{remark}

\begin{cor}
 The Jouanolou derivation $d$ of $K[x_1,x_2,\ldots,x_n]$ defined by:
$$d(x_1)=x_2^s, d(x_2)=x_3^s, \ldots, d(x_{n-1})=x_{n}^s, d(x_n)=x_1^s,$$ for $s \geq 1$ and $n \geq 2$ has no Darboux polynomials if and only if $K(x_1,x_2,\ldots,x_n)^{d}=K$ . 
\end{cor}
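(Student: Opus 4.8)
The plan is to recognize the Jouanolou derivation as a special instance of the homogeneous generalized cyclotomic derivation governed by Theorem \ref{MT}, after which the statement follows immediately. The entire task reduces to checking that the definitional hypotheses are met and then quoting the general result.

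First I would exhibit the required cyclic partition. Take $k=n$ and split $S=\{x_1,\dots,x_n\}$ into the singletons $S_i=\{x_i\}$ for $1\leq i\leq n$, so that $t_i=1$ for every $i$ and the unique variable of $S_i$ plays the role of $x_{i,1}$. With this relabelling the defining relations read $d(x_{i,1})=x_{i+1,1}^{s}$ for $1\leq i\leq n-1$ and $d(x_{n,1})=x_{1,1}^{s}$; in particular $d(S_i)\subseteq K[S_{i+1}]$ for $1\leq i\leq n-1$ and $d(S_n)\subseteq K[S_1]$. Thus $d$ is a generalized cyclotomic derivation in the sense of the Definition, with exponents $\alpha_{i_1,1}=s$ and all remaining exponents equal to zero. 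Since $n\geq 2$, we indeed have $k=n>1$, as the framework requires.

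Next I would verify homogeneity. Each $d(x_i)=x_{i+1}^{s}$ (with the convention $x_{n+1}=x_1$) is a monomial of total degree $s$, so by the definition preceding Theorem \ref{MT} the derivation $d$ is a homogeneous generalized cyclotomic derivation of degree $s-1$; this is valid for every $s\geq 1$, the case $s=1$ producing degree $0$. All hypotheses of Theorem \ref{MT} are therefore satisfied, and applying it verbatim gives that $d$ has no Darboux polynomials if and only if $K(x_1,\dots,x_n)^{d}=K$, which is the assertion.

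I expect no genuine obstacle here: the only content is the bookkeeping of identifying the singleton cyclic partition and confirming the total-degree condition that places $d$ in the homogeneous class, so that the general theorem specializes directly. It is worth noting that the associated matrix $A=[\alpha_{ij}]-I$ may be singular, since $A=sP-I$ for the cyclic shift matrix $P$ gives $w_d=\prod_{\zeta^{n}=1}(s\zeta-1)$, which vanishes precisely when $s=1$; this is exactly the situation the classical results did not cover, and it is harmless here because Theorem \ref{MT} imposes no condition of the form $w_d\neq 0$.
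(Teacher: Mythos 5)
Your proposal is correct and is exactly the intended argument: the paper states this corollary without proof precisely because the Jouanolou derivation becomes a homogeneous generalized cyclotomic derivation of degree $s-1$ under the singleton partition $S_i=\{x_i\}$, $k=n$, so Theorem \ref{MT} applies verbatim. Your closing observation that $w_d=0$ only when $s=1$, and that Theorem \ref{MT} needs no such nondegeneracy hypothesis, is a nice bonus that accurately reflects the paper's motivation.
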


\section{Generalized cyclotomic derivation in Tensor Product}

Let $m$ and $n$ be positive integers. Assume that $K[X]=K[x_1,\ldots,x_n]$ and $K[Y]=K[y_1,\ldots,y_m]$ are polynomial algebras. Then $K[X] \otimes_K K[Y] \cong K[X,Y] = K[x_1, \ldots, x_n,y_1, \ldots,y_m]$ is a polynomial algebra. If $d_1$ and $d_2$ are $K$-derivations of $K[X]$ and $K[Y]$ respectively, then $d=d_1 \otimes 1 + 1 \otimes d_2$, denoted by $d_1 \oplus d_2$, is the $K$-derivation of $K[X,Y]$ such that $d|_{K[X]}=d_1$ and $d|_{K[Y]}=d_2.$

In $\cite{OM04}$ Nowicki and Ollagnier studied the Darboux polynomial of the tensor product of polynomial algebras and have proved the following result:

\begin{lem} $(\cite{OM04}$ Corollary $3.2)${\label{DPS}}
Let $d_1$ and $d_2$ be homogeneous $K$-derivations of $K[X]$ and $K[Y]$ of degree $s \geq 1$. If $d_1$ and $d_2$ are without Darboux polynomials, then $d_1 \oplus d_2$ is also without Darboux polynomials.
\end{lem}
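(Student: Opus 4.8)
The plan is to argue by contradiction, reducing everything to a single homogeneous Darboux polynomial and then stripping off its top part in the $Y$-variables. First I would check that $d=d_1\oplus d_2$ is itself homogeneous of degree $s$: a monomial $X^{\mu}Y^{\nu}$ of total degree $|\mu|+|\nu|$ is sent by $d$ to $d_1(X^{\mu})Y^{\nu}+X^{\mu}d_2(Y^{\nu})$, and since each of $d_1,d_2$ raises total degree by $s$, both summands have total degree $|\mu|+|\nu|+s$. Hence Lemma \ref{deg} applies to $d$, so if $d$ had a Darboux polynomial it would have a homogeneous one $F$ of total degree $N$ with a homogeneous cofactor $\Lambda$ of degree $s$, say $d(F)=\Lambda F$; I work toward a contradiction from this.

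The key device is the bigrading of $K[X,Y]$ by $(X\text{-degree},Y\text{-degree})$. Under it $d_1$ raises the $X$-degree by $s$ and fixes the $Y$-degree, while $d_2$ fixes the $X$-degree and raises the $Y$-degree by $s$. I would write $F=F_0+\cdots+F_r$, where $F_i$ collects the monomials of $Y$-degree $i$ and $F_r\neq 0$, and $\Lambda=\lambda_0+\cdots+\lambda_s$ with $\lambda_j$ of $Y$-degree $j$ (possible since $\deg\Lambda=s$). Because $s\ge 1$, the only contribution to $Y$-degree $r+s$ on the left of $d(F)=\Lambda F$ is $d_2(F_r)$, and on the right it is $\lambda_s F_r$; comparing these top pieces gives $d_2(F_r)=\lambda_s F_r$, where $\lambda_s$ has $X$-degree $0$, i.e. $\lambda_s\in K[Y]$.

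Next I would regard $K[X,Y]=K[Y][X]$ as a free $K[Y]$-module on the monomials in $X$ and expand $F_r=\sum_{\mu}X^{\mu}c_{\mu}(Y)$, each $c_\mu$ homogeneous of degree $r$ in $Y$. Since $d_2$ annihilates every $X^{\mu}$, the identity $d_2(F_r)=\lambda_s F_r$ reduces coefficientwise to $d_2(c_{\mu})=\lambda_s c_{\mu}$. If $r\ge 1$, any nonzero $c_\mu$ is a non-constant polynomial satisfying a Darboux relation with cofactor $\lambda_s$, contradicting the hypothesis that $d_2$ has no Darboux polynomials (this also covers $\lambda_s=0$, where such $c_\mu$ is a non-constant element of the kernel, i.e. a Darboux polynomial with cofactor $0$). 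Hence $r=0$, so $F\in K[X]$, whence $d_2(F)=0$ and $d_1(F)=\Lambda F$; comparing $Y$-degrees forces $\lambda_j=0$ for $j\ge 1$, so $\Lambda\in K[X]$ and $F$ is a Darboux polynomial of $d_1$, contradicting the hypothesis on $d_1$. This contradiction establishes the claim.

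I expect the main obstacle to be essentially bookkeeping: certifying that the top $Y$-degree component of $d(F)$ comes \emph{only} from $d_2(F_r)$ and that of $\Lambda F$ only from $\lambda_s F_r$. This is exactly where $s\ge 1$ is used, since it guarantees $r+s>r$, so the $d_1$-contributions, which keep the $Y$-degree at most $r$, cannot reach level $r+s$. Running the same extraction with the roles of $X$ and $Y$ interchanged furnishes a symmetric route to the contradiction, which is a useful consistency check on the degree bookkeeping.
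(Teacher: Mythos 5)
Your proof is correct, but there is no proof in the paper to compare it against: the article states Lemma \ref{DPS} purely as a quoted result, namely Corollary 3.2 of Ollagnier and Nowicki \cite{OM04}, and uses it as a black box in Section 4. What you have produced is a self-contained proof of that cited result, and it checks out step by step. Indeed, $d=d_1\oplus d_2$ is homogeneous of degree $s$ for the total grading, so Lemma \ref{deg} legitimately reduces the problem to a homogeneous Darboux polynomial $F$ with homogeneous cofactor $\Lambda$ of degree $s$; in the bigrading by $(X\text{-degree},Y\text{-degree})$, the component of $Y$-degree $r+s$ in $d(F)=\Lambda F$ is exactly $d_2(F_r)=\lambda_s F_r$, since the $d_1$-terms keep $Y$-degree at most $r<r+s$ (this is the one place the hypothesis $s\geq 1$ is genuinely used); and because $1\otimes d_2$ is $K[X]$-linear and annihilates each monomial $X^{\mu}$, that relation splits coefficientwise into $d_2(c_{\mu})=\lambda_s c_{\mu}$ with $\lambda_s\in K[Y]$, so for $r\geq 1$ any nonzero $c_{\mu}$ is a Darboux polynomial of $d_2$ (the cofactor-zero case being allowed by the paper's definition of Darboux polynomial), while $r=0$ puts $F$ in $K[X]$, forces $\lambda_j=0$ for $j\geq 1$, and exhibits $F$ as a Darboux polynomial of $d_1$; either branch gives the contradiction. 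The trade-off between the two treatments is the obvious one: the paper's citation keeps the exposition short by resting on the general tensor-product analysis of \cite{OM04}, whereas your bigraded top-component extraction makes the result available with only Lemma \ref{deg} as input, so the article would become logically self-contained; your argument is also in the same spirit of extreme-component bookkeeping that such tensor-product statements typically rely on, so it is a faithful elementary substitute rather than a genuinely new mechanism.
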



Using Lemma \ref{DPS} and our result on generalized cyclotomic derivations we have the following:

\begin{thm}
Let $d_1$ and $d_2$ be homogeneous generalized cyclotomic derivations of $K[X]$ and $K[Y]$ of degree $s\geq 1$. Then, $d_1 \oplus d_2$ is without Darboux polynomial if and only if $K(X,Y)^{d_1 \oplus d_2}=K$.
\end{thm}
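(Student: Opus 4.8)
The plan is to deduce this theorem from the two ingredients already assembled in the paper: the main Theorem~\ref{MT}, which characterizes when a single homogeneous generalized cyclotomic derivation is without Darboux polynomials, and Lemma~\ref{DPS}, which says that the direct sum of two Darboux-free homogeneous derivations of the same degree $s\geq 1$ is again Darboux-free. First I would dispose of the easy direction: if $d_1\oplus d_2$ is without Darboux polynomials, then as noted in the introduction it follows immediately that $K(X,Y)^{d_1\oplus d_2}=K$, since any nonconstant rational constant would produce (via its numerator or denominator) a Darboux polynomial.

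For the substantive converse, I would argue contrapositively. Suppose $d_1\oplus d_2$ has a Darboux polynomial; the goal is to show $K(X,Y)^{d_1\oplus d_2}\neq K$. The key observation is that a homogeneous generalized cyclotomic derivation $d_i$ of degree $s-1$ in the sense of the earlier definition has $d_i(x_{ij})$ of total degree $s$, so the hypothesis ``degree $s\geq 1$'' here matches the degree-$s$ homogeneity required by Lemma~\ref{DPS}, and both $d_1$ and $d_2$ are homogeneous of the \emph{same} degree $s$. Now I would contrapose Lemma~\ref{DPS}: if $d_1\oplus d_2$ \emph{has} a Darboux polynomial, then at least one of $d_1$, $d_2$ must have a Darboux polynomial. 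Applying Theorem~\ref{MT} to whichever of $d_1$, $d_2$ fails to be Darboux-free, I conclude that the corresponding field of rational constants, $K(X)^{d_1}$ or $K(Y)^{d_2}$, is strictly larger than $K$. Since $K(X)^{d_1}\subseteq K(X,Y)^{d_1\oplus d_2}$ and $K(Y)^{d_2}\subseteq K(X,Y)^{d_1\oplus d_2}$ (any constant of $d_1$ in $K[X]$ is annihilated by $d_2$ and vice versa, because $d_2$ acts trivially on $K[X]$-variables), a nonconstant element of either field lands in $K(X,Y)^{d_1\oplus d_2}$, giving $K(X,Y)^{d_1\oplus d_2}\neq K$. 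This completes the contrapositive and hence the converse.

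The only delicate point, and the step I would check most carefully, is the degree bookkeeping: Lemma~\ref{DPS} is stated for homogeneous derivations of degree $s\geq 1$, whereas the main theorem speaks of generalized cyclotomic derivations ``of degree $s-1$''. I would confirm that the paper's two degree conventions (the ``homogeneous of degree $s$'' convention, where $d(A^m)\subseteq A^{m+s}$, versus the generalized-cyclotomic convention of ``degree $s-1$'', where $d(x_{ij})$ has total degree $s$) are consistent, so that a homogeneous generalized cyclotomic derivation with $d(x_{ij})$ of total degree $s$ is indeed homogeneous of degree $s-1$ in the subspace-shift sense and therefore satisfies the hypothesis of Lemma~\ref{DPS} with the correct parameter. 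Once this alignment is verified, the proof is essentially a one-line combination: contrapose Lemma~\ref{DPS}, invoke Theorem~\ref{MT} on the offending factor, and use the inclusion of the factor's constant field into the constant field of the direct sum.
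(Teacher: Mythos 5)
Your proposal is correct and is essentially the paper's own argument read in contrapositive form: the paper assumes $K(X,Y)^{d_1\oplus d_2}=K$, uses the inclusions $K(X)^{d_1},K(Y)^{d_2}\subseteq K(X,Y)^{d_1\oplus d_2}$ to get $K(X)^{d_1}=K(Y)^{d_2}=K$, applies Theorem~\ref{MT} to conclude both $d_1,d_2$ are Darboux-free, and then invokes Lemma~\ref{DPS}, which is exactly your chain of deductions run in the opposite direction. Your extra check of the degree conventions (total degree $s+1$ of the images versus the subspace-shift degree $s$) is sound and confirms that Lemma~\ref{DPS} applies with the same parameter $s$ for both factors.
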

\begin{proof}
$(\Rightarrow)$ Trivial to prove.\\
$(\Leftarrow)$ Let $K(X,Y)^{d_1 \oplus d_2}=K$. As $K(X)^{d_1} \subseteq K(X,Y)^{d_1 \oplus d_2} =K$, we have $K(X)^{d_1} = K$. Similarly, $K(Y)^{d_2}=K$. Therefore, from Theorem $\ref{MT}$, $d_1$ and $d_2$ are without Darboux polynomials. Then by Lemma $\ref{DPS}$, $d_1 \oplus d_2$ is also without darboux polynomials.  
\end{proof}

\section*{Acknowledgements}
This research is partially supported by DST-INSPIRE grant IFA-13/MA-30.  The first author is financially supported by CSIR, India.


\begin{thebibliography}{9}

\bibitem{NA94}
Nowicki, Andrzej. (1994). Polynomial derivations and their rings of constants. Uniwersytet Mikolaja Kopernika, Toruń.
\bibitem{NA96}
Nowicki, Andrzej.(1996). On the nonexistence of rational first integrals for systems of linear differential equations. Linear Algebra Appl. 235: 107-120.
\bibitem{OM04}
Ollagnier, Jean Moulin, Andrzej Nowicki. (2004). Constants and Darboux polynomials for tensor products of polynomial algebras with derivations. Comm. Algebra. 32(1): 379-389.
\bibitem{OM11}
Ollagnier, Jean Moulin, Andrzej Nowicki. (2011). Monomial derivations. Comm. Algebra. 39(9): 3138-3150.
\bibitem{OM93}
Ollagnier, Jean Moulin, Andrzej Nowicki, Jean-Marie Strelcyn. (1995) On the non-existence of constants of derivations: The proof of a theorem of Jouanolou and its development. Bull. Sci. Math. 119:195-233.
\end{thebibliography}
\end{document}